\documentclass[12pt]{article}
\usepackage{}
\setlength{\textwidth}{14cm} \setlength{\textheight}{22cm}
\setlength{\evensidemargin}{55pt} \setlength{\oddsidemargin}{65pt}
\usepackage{mathrsfs,graphicx,amsmath,amssymb,amsfonts}

\setcounter{page}{1}
\usepackage{appendix}
\usepackage{caption}
\usepackage{mathrsfs,graphicx,amsmath,amssymb,amsfonts,amsthm}
\usepackage{cite}
\usepackage{longtable}
\usepackage{supertabular}
\pagestyle{myheadings} \markright{} \textwidth 150mm \textheight 235mm \oddsidemargin=1cm
\evensidemargin=\oddsidemargin\topmargin=-1.5cm

\newtheorem{lem}{Lemma}[section]

\newtheorem{thm}[lem]{Theorem}

\theoremstyle{definition}

\usepackage{tikz,xcolor,hyperref}
\definecolor{lime}{HTML}{A6CE39}
\DeclareRobustCommand{\orcidicon}{
\begin{tikzpicture}
\draw[lime, fill=lime] (0,0)
circle[radius=0.16]
node[white]{{\fontfamily{qag}\selectfont \tiny \.{I}D}};
\end{tikzpicture}
\hspace{-2mm}}
\foreach \x in {A, ..., Z}{%
\expandafter\xdef\csname orcid\x\endcsname{\noexpand\href{https://orcid.org/\csname orcidauthor\x\endcsname}{\noexpand\orcidicon}}}

\begin{document}
\title{The least distance eigenvalue of the complements of
graphs of diameter greater than three \thanks{This work is supported by Natural Science Foundation for Young People of Xinjiang Province (No. 2022D01B136).}}
\author{Xu Chen$^1$\hspace{-1.5mm}\orcidA{},
Yinfen Zhu$^2$,
Guoping Wang$^3$\footnote{Corresponding author. Email: xj.wgp@163.com.(G. Wang)}\\
{\small 1. School of Statistics and Data Science, Xinjiang University of Finance and Economics,
}\\
{\small {\"U}r{\"u}mqi, Xinjiang 830012, P.R.China;}\\
{\small 2. School of Mathematics and Science, Xinjiang Institute of Engineering, }\\
{\small {\"U}r{\"u}mqi, Xinjiang 830023, P.R.China;}\\
{\small 3. School of Mathematical Sciences, Xinjiang Normal University, }\\
{\small {\"U}r{\"u}mqi, Xinjiang 830017, P.R.China}}

\date{}
\maketitle {\bf Abstract.}
Suppose $G$ is a connected simple graph with the vertex set
$V( G ) = \{ v_1,v_2,\cdots ,v_n  \} $.
Let $d_G( v_i,v_j ) $ be the least distance between $v_i$ and $v_j$ in $G$.
Then the distance matrix of $G$ is $D( G ) =( d_{ij} ) _{n\times n}$,
where $d_{ij}=d_G( v_i,v_j ) $.
Since $D( G )$ is a non-negative real symmetric matrix,
its eigenvalues can be arranged as
$\lambda_1(G)\ge \lambda_2(G)\ge \cdots \ge \lambda_n(G)$,
where eigenvalue $\lambda_n(G)$
is called the least distance eigenvalue of $G$.
In this paper we determine the unique graph whose least distance eigenvalue attains maximum among all complements of graphs of diameter greater than three.

{\flushleft{\bf Key words:}} Distance matrix; Diameter; Least distance eigenvalues; Complements of graphs.\\
{\flushleft{\bf MR(2020) Subject Classification:}}  05C12, 05C50\\

\section{Introduction}

~~~~~The complement of graph $G=( V( G ) ,E( G ) )$ is denoted by $G^c=( V( G^c ) ,E( G^c ) )$,
where $V( G^c ) =V( G ) $ and $E( G^c ) = \{ xy\notin E(G):x,y\in V( G )\} $.
The spectrum and distance spectrum of complements of graphs have been studied, see references \cite{J.G.S, Y.G.D, L.H.Q.2, Q.R, L.S.C, chenxu,  F.Y.Z}.
X. Chen and G. Wang \cite{chenxu} determined the unique graphs whose distance spectral radius respectively attains maximum and minimum among all complements of graphs of diameter greater than three,
and the unique graph whose least distance eigenvalue attains minimum
among all complements of graphs of diameter greater than three.
In this paper, we determine the unique graph whose least distance eigenvalue attains maximum among all complements of graphs of diameter greater than three.

\section {Main results}

~~~~Let $G$ be a connected simple graph with the vertex set $V( G ) = \{ v_1,v_2,\cdots ,v_n  \} $.
Then the adjacenct matrix of $G$ is $A( G ) =( a_{ij} ) _{n\times n}$,
where $a_{ij}=1$ if $v_i$ is adjacent to $v_j$, and $a_{ij}=0$ otherwise.
Let $J_n$ be the matrix of order $n$ whose all entries are $1$, and $I_n$ be the identity matrix of order $n$.
Suppose $A=(a_{ij})_{n\times n}$ and $B=(b_{ij})_{n\times n}$.
Then we write $A=B$ if $a_{ij}=b_{ij}$, and $A\ge B$ if $a_{ij}\ge b_{ij}$.
We denote by $d(G)$ the diameter of $G$, which is the farthest distance between all pairs of vertices.

The below Lemma 2.1 reflects the relationship of $D(G^c)$ and $A(G)$.

\begin{lem}
[\cite{chenxu}]
Suppose $G$ is a simple connected graph on $n$ vertices whose diameter $d(G)$ is greater than two.
Then we have
\begin{enumerate}
\setlength{\parskip}{0ex}
\item[\rm (I.)]
when $d( G ) > 3$, $D( G^c ) =J_n-I_n+A( G )$.
\item[\rm (II.)]
when $d( G ) =3$, $D( G^c )\geq J_n-I_n+A( G )$.
\end{enumerate}
\end{lem}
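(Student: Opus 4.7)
The plan is to verify the matrix identity in (I) and the matrix inequality in (II) entry by entry, splitting into three cases based on whether $v_i=v_j$, $v_iv_j\notin E(G)$, or $v_iv_j\in E(G)$. Before starting I would note the classical fact that $d(G)\geq 3$ forces $G^c$ to be connected (any graph of diameter at least three has complement of diameter at most three), so that $D(G^c)$ is well-defined throughout.

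The diagonal case is immediate: $d_{G^c}(v_i,v_i)=0=(J_n-I_n+A(G))_{ii}$. For $i\neq j$ with $v_iv_j\notin E(G)$, we have $v_iv_j\in E(G^c)$, so $d_{G^c}(v_i,v_j)=1=(J_n-I_n+A(G))_{ij}$. The remaining case is $v_iv_j\in E(G)$, where the target entry of $J_n-I_n+A(G)$ equals $2$. Since in this case $v_iv_j\notin E(G^c)$, we immediately obtain $d_{G^c}(v_i,v_j)\geq 2$, which already yields the matrix inequality in part (II) (the other two cases contribute equalities).

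The only substantive step lies in part (I): I must strengthen the last bound to $d_{G^c}(v_i,v_j)=2$ under the stronger hypothesis $d(G)>3$. This reduces to exhibiting a vertex $w\notin\{v_i,v_j\}$ that is non-adjacent in $G$ to both $v_i$ and $v_j$, so that $v_iwv_j$ becomes a length-two path in $G^c$. The main obstacle is precisely this existence claim, and I would handle it by contrapositive. Suppose instead that every $w\neq v_i,v_j$ is adjacent in $G$ to at least one of $v_i,v_j$. Then, using the edge $v_iv_j$ itself, any two vertices $x,y\in V(G)$ admit a walk $x\to(v_i\text{ or }v_j)\to(v_i\text{ or }v_j)\to y$ of length at most three, forcing $d(G)\leq 3$ and contradicting $d(G)>3$. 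Hence the required common non-neighbor $w$ exists, giving $d_{G^c}(v_i,v_j)=2$ and completing the proof of (I).
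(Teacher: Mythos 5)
Your proof is correct. Note that the paper does not actually prove this lemma --- it imports it by citation from \cite{chenxu} --- so there is no in-paper argument to compare against; your write-up supplies a valid self-contained proof. The entrywise case split is the natural one, and the only nontrivial point is exactly where you locate it: producing, for an edge $v_iv_j$ of $G$, a common non-neighbor $w$ when $d(G)>3$. Your contrapositive argument for that step is sound (if every other vertex meets $\{v_i,v_j\}$, the edge $v_iv_j$ stitches any pair $x,y$ together by a walk of length at most three, so $d(G)\le 3$), and your preliminary appeal to the standard fact that $d(G)\ge 3$ implies $d(G^c)\le 3$ correctly guarantees that $D(G^c)$ is well-defined, which is all that part (II) additionally requires.
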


In this paper we always assume that the diameter of $G$ is greater than three, and so
$G$ and its complement $G^c$ are both connected.
Using the relations between $D(G^c)$ and $A(G)$ stated in Lemma 2.1
we determine the unique graph whose least distance eigenvalue attains maximum
among all complements of graphs.
\vskip 3mm

If two vertices $u$ and $v$ are adjacent
then we write $u\thicksim v$.
Suppose $G$ is a connected simple graph with the vertex set $V(G) =\{ v_1,v_2,\cdots ,v_n \}$.
Let $x=( x_1,x_2,\cdots ,x_n ) ^T$ be an eigenvector of $A(G)$,
where $x_i$ corresponds to $v_i$, i.e., $x( v_i ) =x_i$ for $i=1,2,\cdots ,n$. Then
\begin{equation}
x^TA(G)x=2\sum_{v_i\thicksim v_j}{x_ix_j}
\end{equation}

Suppose $y=(y_1,y_2,\cdots ,y_n)^T$ is an eigenvector of $D(G)$ with respect to the eigenvalue $\lambda$,
where $y_i$ corresponds to $v_i$, i.e., $y( v_i ) =y_i$ for $i=1,2,\cdots ,n$. Then
\begin{equation}
\lambda y_i=\sum_{v_j\in V( G )}{d_{ij}y_j}.
\end{equation}


\begin{lem}
Suppose $G$ is connected graph of diameter greater than three on $n$ vertices.
Let $G_{T_1}$ be the connected graph of diameter greater than three obtained from $G$ by deleting an edge $uv$ and connecting $u$ and $v_1$.
Set $x$ to be an eigenvector of $D(G^c)$ with respect to $\lambda_n(G^c)$.
If $x(u)x(v)\ge x(u)x(v')$,
then $\lambda_n(G^c)\ge \lambda _n(G_{T_1}^c)$.
\end{lem}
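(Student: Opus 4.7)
The plan is to use the Rayleigh (min-max) characterization of the least eigenvalue, combined with Lemma 2.1(I), to reduce the inequality $\lambda_n(G^c)\ge\lambda_n(G_{T_1}^c)$ to a simple sign comparison. Since both $G$ and $G_{T_1}$ have diameter greater than three, part (I) of Lemma 2.1 applies to each, giving
\[
D(G^c)=J_n-I_n+A(G),\qquad D(G_{T_1}^c)=J_n-I_n+A(G_{T_1}).
\]
Subtracting yields $D(G_{T_1}^c)-D(G^c)=A(G_{T_1})-A(G)$, and the latter is a very sparse symmetric matrix: its only nonzero entries are $-1$ at positions $(u,v)$ and $(v,u)$ (the deleted edge) and $+1$ at positions $(u,v_1)$ and $(v_1,u)$ (the added edge).

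Next, let $x$ be the eigenvector of $D(G^c)$ corresponding to $\lambda_n(G^c)$, normalized so that $x^Tx=1$; then $\lambda_n(G^c)=x^TD(G^c)x$. By the variational characterization of the least eigenvalue applied to the symmetric matrix $D(G_{T_1}^c)$,
\[
\lambda_n(G_{T_1}^c)\ \le\ x^TD(G_{T_1}^c)x\ =\ x^TD(G^c)x+x^T\bigl(A(G_{T_1})-A(G)\bigr)x.
\]
Using the identity (1) in the excerpt (or a direct entrywise expansion) for the sparse difference matrix, the last term equals
\[
x^T\bigl(A(G_{T_1})-A(G)\bigr)x\ =\ 2x(u)x(v_1)-2x(u)x(v)\ =\ -2\bigl(x(u)x(v)-x(u)x(v_1)\bigr).
\]

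The hypothesis $x(u)x(v)\ge x(u)x(v_1)$ immediately makes this quantity non-positive, so $\lambda_n(G_{T_1}^c)\le\lambda_n(G^c)$, as required.

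I do not expect any serious obstacle: the computation is a one-step Rayleigh-quotient bound, and Lemma 2.1(I) removes any need to reason about distances directly. The only points that need a quick word of justification are (a) that $G_{T_1}$ is also connected with diameter exceeding three (this is built into the statement of the lemma as a hypothesis, so Lemma 2.1(I) is legitimately applicable to $G_{T_1}^c$), and (b) that $uv_1\notin E(G)$, which is implicit in the phrase ``connecting $u$ and $v_1$'' — if $uv_1$ were already an edge, the addition would be vacuous and the sparse-difference computation would actually be cleaner still, preserving the conclusion.
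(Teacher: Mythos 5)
Your proof is correct and follows essentially the same route as the paper's: apply Lemma 2.1(I) to both graphs, compare the Rayleigh quotients of $D(G^c)$ and $D(G_{T_1}^c)$ at the least eigenvector $x$ of $D(G^c)$, and use the sign hypothesis (you correctly read the paper's $v'$ as the typo it is for $v_1$). Your explicit computation of $x^T\bigl(A(G_{T_1})-A(G)\bigr)x=2x(u)x(v_1)-2x(u)x(v)$ just makes transparent the inequality $x^TA(G)x\ge x^TA(G_{T_1})x$ that the paper asserts directly.
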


\begin{proof}
Recall that $x(u)x(v)\ge x(u)x(v')$.
Then we easily have $x^TA( G ) x\ge x^TA(G_{T_1}) x$.
Note that $d( G ) >3$ and $d(G_{T_1}) >3$.
By Lemma 2.1 and equation (1) we have
\begin{equation}
\begin{split}
\lambda_n(G^c) &=x^TD(G^c)x\\
&=x^T( J_n-I_n ) x+x^TA( G ) x\\
&\ge x^T( J_n-I_n ) x+x^TA(G_{T_1}) x\\
&\ge x^TD(G_{T_1}^c) x.
\nonumber
\end{split}
\end{equation}
By Rayleigh's theorem we have $x^TD(G_{T_1}^c) x \ge \lambda _n(G_{T_1}^c)$,
and so $\lambda_n(G^c)\ge \lambda _n(G_{T_1}^c)$.
\end{proof}

Let $x=(x_1,x_2,\cdots ,x_n)$ be an eigenvector of $D(G^c)$  with respect to $\lambda_n(G^c)$, where $x(v_i)=x_i$ $(x(v_i)=x_i$ $(i=1,2,\cdots,n))$.
Write $V_+=\{v_i\in V(G^c): x_i>0\}$, $V_-=\{v_i\in V(G^c): x_i<0\}$ and $V_0=\{v_i\in V(G^c): x_i=0\}$.
Let $S$ be a subset of $V(G)$. We denote by $G[S]$ the subgraph of $G$ induced by $S$.

\begin{lem}
Suppose $G$ is a connected graph of diameter greater than three.
Let $x$ be an eigenvector of $D(G^c)$  with respect to $\lambda_n(G^c)$.
Let $G_1$ be a connected graph obtained from $G$ by deleting one edge in
either $G[V_+\cup V_0]$ or $G[V_-]$.
Then $\lambda_n(G^c)\ge \lambda _n(G_1^c)$.
\end{lem}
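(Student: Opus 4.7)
The plan is to mirror the argument of Lemma 2.2, relying on Lemma 2.1(I) to translate distances in the complement into adjacencies in the original graph. My first step is to verify that $d(G_1) > 3$: since $G_1$ is obtained from the connected graph $G$ by deleting a single edge and remains connected, no pairwise distance can decrease, so $d(G_1) \ge d(G) > 3$. Consequently Lemma 2.1(I) applies to both $G$ and $G_1$, yielding the equalities $D(G^c) = J_n - I_n + A(G)$ and $D(G_1^c) = J_n - I_n + A(G_1)$.

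Next I would take $x$ to be a unit eigenvector of $D(G^c)$ corresponding to $\lambda_n(G^c)$, so that $\lambda_n(G^c) = x^T D(G^c)x$, and let $uv$ denote the deleted edge. Because $A(G)$ and $A(G_1)$ differ only in the two symmetric positions $(u,v)$ and $(v,u)$, where the entry drops from $1$ to $0$, identity (1) gives
\[
x^T A(G)x - x^T A(G_1)x = 2\,x(u)\,x(v).
\]
The hypothesis is exactly what is needed to sign this expression: if $uv \in E(G[V_+\cup V_0])$ then $x(u), x(v) \ge 0$, while if $uv \in E(G[V_-])$ then $x(u), x(v) < 0$; in either case $x(u)x(v) \ge 0$, so $x^T A(G) x \ge x^T A(G_1) x$.

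From here the chain closes up just as in Lemma 2.2. Adding $x^T(J_n - I_n)x$ to both sides of this inequality and using the two equalities above gives $x^T D(G^c) x \ge x^T D(G_1^c) x$, and then Rayleigh's principle for the least eigenvalue supplies $x^T D(G_1^c) x \ge \lambda_n(G_1^c)$, whence $\lambda_n(G^c) \ge \lambda_n(G_1^c)$ as claimed. I do not foresee any real obstacle in this proof; the only subtlety worth highlighting is that the diameter condition must be preserved after edge deletion, so that the \emph{equality} form of Lemma 2.1(I) is available for $G_1^c$ — the weaker inequality of Lemma 2.1(II) would not be enough here, since $x$ is permitted to take values of both signs and one cannot control $x^T(D(G_1^c) - (J_n - I_n + A(G_1)))x$ otherwise.
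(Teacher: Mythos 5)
Your proof is correct and follows essentially the same route as the paper's: establish $d(G_1)\ge d(G)>3$, apply Lemma 2.1(I) to both graphs, show $x^TA(G)x\ge x^TA(G_1)x$, and finish with Rayleigh's principle. The only difference is that you spell out the key inequality via $x^TA(G)x-x^TA(G_1)x=2x(u)x(v)\ge 0$ (which the paper merely asserts), and your remark that the equality form of Lemma 2.1(I) — not the inequality of part (II) — is what makes the argument close is a correct and worthwhile observation.
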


\begin{proof} We first have $x^TA(G) x\ge x^TA(G_1) x$.
Note that $d(G_1)\ge d(G)>3$.
By Lemma 2.1 and the equation (1) we have
\begin{equation}
\begin{split}
\lambda_n(G^c) &=x^TD(G^c)x\\
&=x^T( J_n-I_n ) x+x^TA( G ) x\\
&\ge x^T( J_n-I_n ) x+x^TA(G_1) x\\
&\ge x^TD(G_1^c) x.
\nonumber
\end{split}
\end{equation}
By Rayleigh's theorem we have $x^TD(G_1^c) x \ge \lambda _n(G_1^c)$,
and so $\lambda_n(G^c)\ge \lambda _n(G_1^c)$.
\end{proof}

\begin{lem}
Suppose $G$ is a connected graph of diameter greater than three.
Set $x$ to be an eigenvector of $D(G^c)$ with respect to $\lambda_n(G^c)$.
Let $G_2$ be a connected graph of diameter greater than three obtained from $G$ by connecting one pairs of vertices between $V_+\cup V_0$ and $V_-$ which are not adjacent.
Then $\lambda_n(G^c)\ge \lambda _n(G_2^c)$.
\end{lem}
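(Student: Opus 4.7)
The plan is to mimic the structure of the proofs of Lemma 2.2 and Lemma 2.3, since the setup is essentially the same: we compare the quadratic forms $x^{T}D(G^c)x$ and $x^{T}D(G_2^c)x$ using the same eigenvector $x$, convert each to a quadratic form in the adjacency matrix via Lemma 2.1, and then close the inequality by Rayleigh's theorem. The key sign analysis now runs in the opposite direction from Lemma 2.3: instead of deleting a monochromatic edge (where $x(u)x(v)\ge 0$), we are inserting a ``dichromatic'' edge between $V_+\cup V_0$ and $V_-$, and the fact that this edge contributes non-positively to the adjacency quadratic form is exactly what drives the inequality.

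First I would record the single edge-perturbation identity. Writing $uv$ for the new edge added to produce $G_2$ with $u\in V_+\cup V_0$ and $v\in V_-$, we have $A(G_2)=A(G)+E_{uv}$, where $E_{uv}$ has ones in positions $(u,v)$ and $(v,u)$ and zeros elsewhere. Using equation~(1), this yields
\begin{equation}
x^{T}A(G_2)x=x^{T}A(G)x+2x(u)x(v).
\nonumber
\end{equation}
By the definition of $V_+$, $V_0$ and $V_-$ we have $x(u)\ge 0$ and $x(v)<0$, so $x(u)x(v)\le 0$, and therefore $x^{T}A(G)x\ge x^{T}A(G_2)x$. This is the only place the hypothesis on the partition of vertices is used.

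Next, because $d(G)>3$ and, by hypothesis, $d(G_2)>3$, part (I) of Lemma 2.1 applies to both graphs, giving $D(G^c)=J_n-I_n+A(G)$ and $D(G_2^c)=J_n-I_n+A(G_2)$. Subtracting off the common term $x^{T}(J_n-I_n)x$, the inequality of the previous paragraph upgrades to
\begin{equation}
\lambda_n(G^c)=x^{T}D(G^c)x=x^{T}(J_n-I_n)x+x^{T}A(G)x\ge x^{T}(J_n-I_n)x+x^{T}A(G_2)x=x^{T}D(G_2^c)x,
\nonumber
\end{equation}
where the first equality uses that $x$ is a unit eigenvector of $D(G^c)$ for $\lambda_n(G^c)$, following the same normalisation as in the proofs of Lemma 2.2 and Lemma 2.3. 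Finally, Rayleigh's theorem gives $x^{T}D(G_2^c)x\ge \lambda_n(G_2^c)$, and chaining the two inequalities yields $\lambda_n(G^c)\ge\lambda_n(G_2^c)$.

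There is no genuine obstacle here; the one bookkeeping point to watch is that Lemma 2.1(I) really does apply to both $G$ and $G_2$, which is why the statement explicitly demands that $G_2$ still has diameter greater than three. If the edge addition were allowed to collapse the diameter to $3$ (or less), only the inequality of Lemma 2.1(II) would be available for $G_2$, and the chain above would break at the step where $x^{T}A(G_2)x$ is re-expressed through $D(G_2^c)$. Apart from this sanity check, the argument is a direct transposition of Lemma 2.3.
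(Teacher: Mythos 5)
Your proposal is correct and follows the paper's own argument for this lemma essentially verbatim: the same chain $\lambda_n(G^c)=x^{T}D(G^c)x=x^{T}(J_n-I_n)x+x^{T}A(G)x\ge x^{T}(J_n-I_n)x+x^{T}A(G_2)x=x^{T}D(G_2^c)x\ge\lambda_n(G_2^c)$ via Lemma 2.1(I) and Rayleigh's theorem. The only difference is that you spell out the sign computation $x^{T}A(G_2)x-x^{T}A(G)x=2x(u)x(v)\le 0$, which the paper leaves implicit.
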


\begin{proof}
We first have $x^TA( G ) x\ge x^TA(G_2) x$.	
Note that $d(G)>3$ and $d(G_2)>3$.
By Lemma 2.1 and the equation (1) we have
\begin{equation}
\begin{split}
\lambda_n(G^c) &=x^TD(G^c)x\\
&=x^T( J_n-I_n ) x+x^TA( G ) x\\
&\ge x^T( J_n-I_n ) x+x^TA(G_2) x\\
&\ge x^TD(G_2^c) x.
\nonumber
\end{split}
\end{equation}
By Rayleigh's theorem we have $x^TD(G_2^c) x \ge \lambda _n(G_2^c)$,
and so $\lambda_n(G^c)\ge \lambda _n(G_2^c)$.
\end{proof}

Let $x$ be an eigenvector of $D(G^c)$  with respect to $\lambda_n(G^c)$.
If there are $s$ edges of $G[V_+\cup V_0]$ and $G[V_-]$ and $t$ edges between $V_+\cup V_0$ and $V_-$,
then we write $G$ for $G_{s,t}$.
Let $\mathbb{G}_{s+1,t}$ be a set of such graphs of diameter greater than three obtained from $G_{s,t}$ by adding an edge in $G[V_+\cup V_0]$ and $G[V_-]$, and $\mathbb{G}_{s,t-1}$ be a set of such graphs of diameter greater than three obtained from $G_{s,t}$ by deleting an edge between $V_+\cup V_0$ and $V_-$.
By applying Lemmas 2.2-2.4 we easily obtain the following lemma.

\begin{lem}
Let $G_{s,t}$ be defined as above.
If $d(G_{s,t})>3$ then we have
\begin{equation}
\begin{split}
\lambda_n(G_{s,t}^c)\le \max
\left\{\mathop{\max}_{G\in \mathbb{G}_{s+1,t}} \lambda_n(G^c), \mathop{\max}_{G\in \mathbb{G}_{s,t-1}} \lambda_n(G^c)\right\}.
\nonumber
\end{split}
\end{equation}
\end{lem}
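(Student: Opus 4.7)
The claim is presented as an immediate consequence of Lemmas 2.3 and 2.4, used in the reverse direction, so my plan is to build each of the two bounds separately and then combine them.

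First, pick any $H\in\mathbb{G}_{s+1,t}$. By the definition of $\mathbb{G}_{s+1,t}$ there is an edge $e$ contained in $G_{s,t}[V_{+}\cup V_{0}]$ or in $G_{s,t}[V_{-}]$ with $H=G_{s,t}+e$, so equivalently $G_{s,t}=H-e$ is the deletion from $H$ of a same-sign edge. Reading Lemma 2.3 with $H$ in the role of ``$G$'' and $G_{s,t}$ in the role of ``$G_{1}$'' yields
\[
\lambda_{n}(H^{c})\ \ge\ \lambda_{n}(G_{s,t}^{c}),
\]
and taking the maximum over $H\in\mathbb{G}_{s+1,t}$ produces the first term on the right-hand side. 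Symmetrically, for any $H'\in\mathbb{G}_{s,t-1}$ we have $G_{s,t}=H'+e'$ with $e'$ joining $V_{+}\cup V_{0}$ and $V_{-}$, so $G_{s,t}$ arises from $H'$ by adjoining a single opposite-sign edge. This is precisely the setup of Lemma 2.4 with $H'$ as ``$G$'' and $G_{s,t}$ as ``$G_{2}$'', giving $\lambda_{n}(H'^{c})\ge \lambda_{n}(G_{s,t}^{c})$ and hence the second term after maximising over $H'$. The two inequalities together are the stated upper bound on $\lambda_{n}(G_{s,t}^{c})$.

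The one point that deserves scrutiny — and is what is hidden inside the author's ``easily'' — is the alignment of sign-partitions. The classes $V_{+},V_{0},V_{-}$ that define $\mathbb{G}_{s+1,t}$ and $\mathbb{G}_{s,t-1}$ are read off from the eigenvector $x$ of $D(G_{s,t}^{c})$, whereas Lemmas 2.3 and 2.4 phrase their hypotheses in terms of the eigenvector of the graph playing the role of ``$G$'', which in the reverse reading is $H$ or $H'$. To legitimize the reverse applications of Lemmas 2.3 and 2.4 one has to check that the modified edge retains its same-sign (respectively opposite-sign) status relative to $H$'s (respectively $H'$'s) own eigenvector; this is the main bookkeeping obstacle, and is presumably why Lemma 2.2 is cited alongside, since a one-step edge re-shuffling of the kind handled there can be used to arrange that the eigenvectors of $G_{s,t}$ and of the modified graph place the moved edge in the correct partition. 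Once that matching is in place, the remainder of the proof is mechanical.
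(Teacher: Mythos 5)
Your reading of which lemma pairs with which set is the natural one, and it is almost certainly what the authors intend (the paper itself offers no proof at all --- it only asserts that the lemma ``easily'' follows from Lemmas 2.2--2.4). But the obstacle you flag in your last paragraph is not a bookkeeping detail to be waved at: it is a genuine gap, and your proposal does not close it. To read Lemma 2.3 with $H=G_{s,t}+e$ in the role of $G$ and $G_{s,t}$ in the role of $G_1$, you need $e$ to lie inside $V_+\cup V_0$ or inside $V_-$ \emph{as these classes are determined by the least eigenvector $y$ of $D(H^c)$}; the definition of $\mathbb{G}_{s+1,t}$ only guarantees this for the eigenvector $x$ of $D(G_{s,t}^c)$, and there is no reason the two sign patterns agree at $u$ and $v$. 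The same mismatch occurs for $\mathbb{G}_{s,t-1}$ and Lemma 2.4. Quantitatively: writing $D(H^c)=D(G_{s,t}^c)+E_{uv}$ and using $x$ as a test vector only yields $\lambda_n(H^c)\le x^TD(H^c)x=\lambda_n(G_{s,t}^c)+2x_ux_v$, an upper bound in the wrong direction; using $y$ yields $\lambda_n(H^c)\ge\lambda_n(G_{s,t}^c)+2y_uy_v$, which is useful only if $y_uy_v\ge 0$, precisely the unverified hypothesis. So the Rayleigh mechanism that powers Lemmas 2.2--2.4 simply does not run in reverse here.

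Your suggested repair via Lemma 2.2 cannot work either, because Lemma 2.2 has the same one-directional structure as Lemmas 2.3 and 2.4: it bounds $\lambda_n$ of the \emph{modified} graph from above by $\lambda_n$ of the original, using the original's eigenvector, so it provides no mechanism for producing a neighbour of $G_{s,t}$ with a \emph{larger} least eigenvalue. To make your argument complete you would need either (i) a continuity/perturbation argument showing that for at least one admissible edge the sign pattern of the perturbed eigenvector at $u,v$ is preserved, or (ii) an entirely different proof (e.g.\ a contradiction argument assuming every $H$ in both sets satisfies $\lambda_n(H^c)<\lambda_n(G_{s,t}^c)$). Neither is supplied, and neither is obvious; indeed the lemma as stated may require a substantially more careful proof than the paper's one-line attribution suggests. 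As written, your proposal correctly diagnoses the difficulty but proves only the (easy) converse-direction inequalities, not the stated one.
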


Let $K_{n-2}$ be a complete graph of order $n-2$.
We denote by $K'$ the graph by deleting an edge $uv$ of $K_{n-2}$ and respectively appending vertices $u'$ and $v'$ to $u$ and $v$.
Let $K''$ denote the graph by deleting an edge $uv$ of $K_{n-2}$ and appending a path of order $2$ to the vertex $u$.
Clearly, $d(K')=d(K'')=4$.

\begin{lem}
Let $K'$ and $K''$ be defined as above.
Then we have $\lambda_n(K''^c)\le \lambda_n(K'^c)$.
\end{lem}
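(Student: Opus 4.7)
The plan is to realize $K''$ as a single-edge modification of $K'$ and invoke Lemma~2.2. Relabel the pendant vertices of $K'$ by setting $w_1:=u'$ and $w_2:=v'$, so that $K''=K'-vw_2+w_1w_2$. This identifies $K''$ as the graph $G_{T_1}$ produced from $G=K'$ by deleting $vw_2$ and reconnecting $w_2$ to $w_1$, placing $w_2$, $v$, and $w_1$ in the roles of Lemma~2.2's $u$, $v$, and $v_1$ respectively. Lemma~2.2 then yields $\lambda_n(K'^c)\ge\lambda_n(K''^c)$, provided some eigenvector $x$ of $D(K'^c)$ for $\lambda_n(K'^c)$ satisfies the sign condition $x(w_2)\,x(v)\ge x(w_2)\,x(w_1)$.

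To verify the sign condition, I will analyse the eigenvector of $\lambda_n(K'^c)$ through the equitable partition $\{u,v\}$, $\{u',v'\}$, $W$ of $K'$, where $|W|=m=n-4$. This partition is equitable for $A(K')$ and hence, by Lemma~2.1(I), for $D(K'^c)=J_n-I_n+A(K')$, with quotient matrix
\[
B(D(K'^c))=\begin{pmatrix}1 & 3 & 2m\\ 3 & 1 & m\\ 4 & 2 & 2m-2\end{pmatrix}.
\]
The spectrum of $D(K'^c)$ splits into the three eigenvalues of this quotient matrix (class-constant eigenvectors), the eigenvalues $0$ and $-2$ on the $2$-dimensional antisymmetric subspace for the involution $u\leftrightarrow v$, $u'\leftrightarrow v'$, and $m-1$ further copies of $-2$ from eigenvectors supported on $W$ with zero sum. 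Evaluating the characteristic polynomial of $B(D(K'^c))$ at $\lambda=-2$ gives $6m>0$; as this polynomial is monic of degree three and tends to $-\infty$ as $\lambda\to-\infty$, it must have at least one root strictly below $-2$. Hence $\lambda_n(K'^c)$ is the smallest root of $\det(\lambda I-B(D(K'^c)))=0$, and its eigenvector is class-constant of the form $(\alpha,\alpha,\beta,\beta,\gamma,\ldots,\gamma)$.

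Under this parametrisation the sign condition becomes $\beta(\alpha-\beta)\ge 0$. The plan is to solve the system $B(D(K'^c))(\alpha,\beta,\gamma)^T=\lambda_n(K'^c)(\alpha,\beta,\gamma)^T$ explicitly: subtracting and adding the first two rows yields $\alpha$ and $\beta$ as closed-form expressions in $\gamma$ and $s:=1-\lambda_n(K'^c)$. Using the bound $\lambda_n(K'^c)<-2$ (equivalently $s>3$) together with the entries of $B(D(K'^c))$, one should then be able to pin down the common sign of $\beta$ and of $\alpha-\beta$ and conclude.

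The main obstacle is this sign check. Because $B(D(K'^c))$ is non-symmetric, standard Perron-type arguments do not directly force a sign pattern on the eigenvector of the smallest eigenvalue, so one must argue from the explicit algebraic form obtained above. If that route stalls, a fallback is to compare characteristic polynomials directly: form the $5\times 5$ quotient matrix $B(D(K''^c))$ for the finer partition $\{u\}$, $\{v\}$, $\{w_1\}$, $\{w_2\}$, $W$ of $K''$, note that both quotient characteristic polynomials share the factor $\lambda+2$, divide it out, and then compare the resulting polynomials at the point $\lambda_n(K'^c)$ to locate $\lambda_n(K''^c)$ relative to $\lambda_n(K'^c)$ and establish the desired inequality.
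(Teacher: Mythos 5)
Your primary route breaks down at the sign condition, and the failure is not repairable. Keep your own setup: $m=n-4$, the equitable partition $\{u,v\},\{u',v'\},W$, and the quotient matrix $B$. Its characteristic polynomial is $p(\lambda)=\lambda^3-2m\lambda^2-(6m+12)\lambda+2m-16$, with $p(-2)=6m>0$ and $p(-3.5)=-16.875-1.5m<0$; since the remaining eigenvalues of $D(K'^c)$ are $0$ and $-2$, this locates $\lambda_n(K'^c)$ in the interval $(-3.5,-2)$ (not merely below $-2$). Solving $B(\alpha,\beta,\gamma)^T=\lambda(\alpha,\beta,\gamma)^T$ with $s=1-\lambda$ gives
$\alpha=\frac{m(3-2s)}{s^2-9}\gamma$ and $\beta=\frac{m(6-s)}{s^2-9}\gamma$, hence
$\beta(\alpha-\beta)=\frac{m^2(s-6)}{(s-3)^2(s+3)}\,\gamma^2$.
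Your condition $\beta(\alpha-\beta)\ge 0$ therefore forces $s\ge 6$, i.e.\ $\lambda_n(K'^c)\le -5$; but in fact $s=1-\lambda_n(K'^c)\in(3,4.5)$ and $\gamma\neq 0$, so $\beta(\alpha-\beta)<0$ strictly. Lemma~2.2 is thus inapplicable in the direction you need, and applying it from $K''$ instead would only yield the reverse inequality.

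The fallback is essentially the paper's own method (compare the quotient characteristic polynomials $\phi$ and $\varphi$), but you have not carried it out, and carrying it out exposes a more serious issue: the inequality in the statement appears to be reversed. One finds $\phi(\lambda)-\varphi(\lambda)=-(n-5)(2\lambda+7)(\lambda+2)$, so $q(\lambda):=\varphi(\lambda)/(\lambda+2)=\lambda p(\lambda)+(n-5)(2\lambda+7)$. At $\lambda=\lambda_n(K'^c)\in(-3.5,-2)$ both summands are nonnegative and not both zero, so $q(\lambda_n(K'^c))>0$, while $q(-2)=33-9n<0$; for $n=7$ this gives $\lambda_n(K'^c)\approx -2.969$ but $\lambda_n(K''^c)\approx -2.946$, i.e.\ $\lambda_n(K''^c)>\lambda_n(K'^c)$. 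The paper reaches the opposite conclusion only via the claim that $D(T)$ is a principal submatrix of $D(K'^c)$ because $T$ is an induced subgraph; that is invalid (distances in $K'^c$, a graph of diameter $2$, are shorter than distances within $T$), and the resulting bound $\lambda_n(K'^c)<-3.8$ is false. So neither your main route nor your fallback can close the argument as stated; the lemma itself needs to be re-examined before a proof is attempted.
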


\begin{proof}
Set $x$ to be the eigenvector of $D(K'^c)$ with respect to $\lambda_n(K'^c)$.
Write $\lambda_n(K'^c)$ for $\lambda_n$.
By the symmetry of $K'^c$ all the vertices in $(V_+\cup V_0)\setminus \{u,v,u',v'\}$ correspond to the same value $x_1$.
Let $x(u)=x_u$, $x(v)=x_v$, $x(u')=x_{u'}$
and $x(v')=x_{v'}$.
By the equation (2) we have
$$ \left\{ \begin{array}{l}
	\lambda_nx_u=2x_{u'}+x_v+x_{v'}+2(n-4)x_1,\\
	\lambda_nx_{u'}=2x_u+x_v+x_{v'}+(n-4)x_1,\\
	\lambda_nx_v=x_u+x_{u'}+2x_{v'}+2(n-4)x_1,\\
	\lambda_nx_{v'}=x_u+x_{u'}+2x_v+(n-4)x_1,\\
	\lambda_nx_1=2x_u+x_{u'}+2x_v+x_{v'}+2(n-5)x_1.
\end{array}\right.$$

We can transform the above equation into a matrix equation $\lambda_nI_5-D_{K'^c}x'=0$,
where $x'=(x_u,x_{u'},x_v,x_{v'},x_1)^T$ and
$$D_{K'^c}=\left( \begin{matrix}
	0&2&1&1&2(n-4)\\
	2&0&1&1&n-4\\
	1&1&0&2&2(n-4)\\
	1&1&2&0&n-4\\
	2&1&2&1&2(n-5)
\end{matrix} \right).$$

Let $\phi(\lambda)=\det (I_5\lambda-D_{K'^c})$.
Then we have
\begin{equation}
	\phi(\lambda)=\lambda^5-(2n-10)\lambda^4-(10n-28)\lambda^3-10n\lambda^2+(4n-48)\lambda.
\end{equation}

Similarly, we have
\begin{equation}
	\begin{split}
		\varphi(\lambda)=&\det(I_5\lambda-D_{K''^c})\\
		=&\lambda^5-(2n-10)\lambda^4-(10n-28)\lambda^3\\
		&-(8n+10)\lambda^2+(15n-103)\lambda+14n-70.
	\end{split}
\end{equation}

From the equations (3) and (4), we have
$$\phi(\lambda)-\varphi(\lambda)=(-2n+10)\lambda^2+(-11n+55)\lambda-14n+70.$$

Let $T$ be the tree obtained by appending two pendent vertices to the some one end of the path $P_3$ of order $3$.

By the complements of $K'$ and $K''$, we observe that the tree $T$ of order $5$ is an induced subgraph of $K'^c$ and $K''^c$, and so both
$D(K'^c)$ and $D(K''^c)$ contain a principle submatrix $D(T)$.
Whereas $\lambda_5(T)<-3.8$,
by Interlacing theorem we have $\lambda_n(K'^c)<-3.8$ and $\lambda_n(K''^c)<-3.8$.
Therefore, we can compute out that $\phi(\lambda)-\varphi(\lambda)<0$ if $\lambda<-3.8$ and $n\ge 7$.
This implies $\lambda_n(K''^c)\le \lambda_n(K'^c)$.
\end{proof}

The neighbor $N_G(v)$ of the vertex $v$
of $G$ is the set of the vertices which are
adjacent to $v$.
Suppose $G$ is a simple graph of diameter than three with the vertex set $V(G)=\{v_1,v_2,\cdots v_n\} (n\ge 7)$.
Set $x$ to be an eigenvector of $D(G^c)$ with resepct to $\lambda_n(G^c)$.
Let $|V_+\cup V_0|=p$ and $|V_-|=q$.
Without loss of generality in what follows we assume that $p\ge q$.

\begin{lem}
Suppose $G$ is a simple graph of diameter greater than three on $n\ge 7$ vertices.
If $q=1$ then $\lambda_n(G^c)\le \lambda_n(K'^c)$.
\end{lem}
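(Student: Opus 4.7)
The plan is to apply Lemmas 2.2--2.5 iteratively to reduce $G$ to a structurally simple graph of diameter greater than three, and then to bound its least distance eigenvalue by $\lambda_n(K'^c)$ using Lemma 2.6.

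Write $V_-=\{v_0\}$, so $G[V_-]$ is edgeless; the only available ``add an internal edge'' move of Lemma 2.5 is therefore an addition inside $G[V_+\cup V_0]$. Starting from $G$, I would repeatedly apply Lemma 2.5: at each stage either add an edge to $G[V_+\cup V_0]$ (while keeping $d(\cdot)>3$), or delete an edge between $V_+\cup V_0$ and $V_-$ (while keeping the graph connected with $d(\cdot)>3$). Each step yields a graph whose complement has at least as large a least distance eigenvalue; since the number of internal edges only increases and the number of cross edges only decreases, the process terminates at some graph $G^*$ with $\lambda_n(G^c)\le\lambda_n(G^{*c})$.

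Next I would analyze the structure of $G^*$. Because $G^*[V_+^*\cup V_0^*]$ is edge-maximal subject to $d(G^*)>3$ and $v_0^*$ has minimum degree subject to connectivity and $d(G^*)>3$, the condition $d(G^*)>3$ forces $v_0^*$ to be a pendant with unique neighbor $w$, and forces the existence of a vertex $z$ at distance~$3$ from $w$ in $G^*[V_+^*\cup V_0^*]$. A counting argument based on $N(w)\cap N(z)=\emptyset$ shows that $|E(G^*[V_+^*\cup V_0^*])|=\binom{n-2}{2}$, that the vertex set $V_+^*\cup V_0^*\setminus\{w,z\}$ induces $K_{n-3}$, and that this vertex set splits as the disjoint union $N(w)\sqcup N(z)$ of sizes $a$ and $n-3-a$ for some $1\le a\le n-4$. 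Repeated application of Lemma 2.2 (shifting edges in the direction determined by the ordering of the eigenvector values) then collapses all such candidate graphs to the case $a=1$, which is precisely $K''$.

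Finally, Lemma 2.6 yields
$$\lambda_n(G^c)\le\lambda_n(G^{*c})=\lambda_n(K''^c)\le\lambda_n(K'^c),$$
completing the argument.

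The main obstacle is the second step: one must verify that the iterative reduction preserves the assumption $q=1$ (so that $V_-^*$ remains a singleton and the above structural analysis applies), and that Lemma 2.2 does collapse all $a\ge 2$ cases to $a=1$. This is delicate because the eigenvector partition $(V_+,V_-,V_0)$ shifts whenever the graph is modified, so one must control the partition along the sequence of reductions.
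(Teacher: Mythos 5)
Your overall strategy --- drive $G$ by Lemmas 2.2--2.5 to a small explicit family and finish with Lemma 2.6 --- is the same as the paper's, but as written the decisive middle step is asserted rather than proved, and you flag the obstruction yourself without resolving it. Concretely: Lemma 2.5 is a one-step statement whose hypotheses are tied to the sign partition $(V_+,V_0,V_-)$ of the \emph{current} graph's least eigenvector. Iterating it to reach an ``edge-maximal'' terminal graph $G^*$ requires knowing that after each modification the new eigenvector still has exactly one negative coordinate and that the new partition is compatible with the next move; nothing in the paper's lemmas gives this, and if $q$ drifts above $1$ your structural description of $G^*$ (pendant $v_0$ at $w$, the rest splitting as $N(w)\sqcup N(z)$ around a $K_{n-3}$) collapses. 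Since the entire conclusion rests on that description, this is a genuine gap, not a technicality. A second gap: even granting the terminal family parameterized by $a=|N(w)|$, your claim that Lemma 2.2 collapses every case to $a=1$ is unsupported --- Lemma 2.2's hypothesis $x(u)x(v)\ge x(u)x(v_1)$ depends on the unknown ordering of eigenvector entries and may push $a$ upward instead; the other endpoint $a=n-4$ of your family is $K'$, not $K''$. This happens to be harmless (Lemma 2.6 covers both endpoints), but your displayed equality $\lambda_n(G^{*c})=\lambda_n(K''^c)$ is not justified and should at best read $\lambda_n(G^{*c})\le\max\{\lambda_n(K'^c),\lambda_n(K''^c)\}=\lambda_n(K'^c)$; moreover each intermediate shift must be checked to preserve $d(\cdot)>3$, which you do not do.

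For comparison, the paper avoids the terminal-graph abstraction entirely: it fixes a diametral path $P=u_1u_2u_3u_4u_5$ (which exists since $d(G)>3$) and does a case analysis on the position of the unique vertex $v\in V_-$ relative to $P$ (namely $v=u_3$; $v=u_2$ or $u_4$; $v=u_1$ or $u_5$; $v\sim u_3$; and a residual case), in each case writing down one explicit package of edge deletions, edge shifts guided by comparisons such as $x(u_1)\ge x(u_5)$, and edge additions inside $V_+\cup V_0$ that lands exactly on $K'$ or $K''$, whose diameter is $4$ by construction. It then invokes Lemma 2.5 once per case and Lemma 2.6 at the end. If you want to salvage your version, you would need either to prove a multi-step form of Lemma 2.5 that controls the sign partition along the iteration, or to replace the iteration by a single explicit transformation from $G$ to a member of your family, as the paper does.
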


\begin{proof}
Since $d(G)>3$, there must be the shortest path $P=u_1u_2u_3u_4u_5$.
Let $V_-=\{v\}$.
Now we distinguish four cases as follows.

{\bf Case 1.} Suppose $v=u_3$.

Without loss of generality we assume that $x(u_1)\ge x(u_5)$.
We denote by $G'$ the graph obtained from $G$ by deleting all edges which are incident to $v$ except $vu_2$,
and deleting all edges between $u_5$ and the vertices in $N_G(u_5)\setminus {u_4}$ and adding edges between $u_1$ and the vertices in $N_G(u_5)\setminus{u_4}$.
Connecting all pairs of vertices of $G'\setminus \{v,u_5\}$ which are not adjacent except $u_2$ and $u_4$ in $G'$, we get a graph isomorphic to $K'$.
It follows from Lemma 2.5 that $\lambda_n(G^c)\le \lambda_n(K'^c)$.

{\bf Case 2.} Suppose $v=u_2$ (or $v=u_4$).

If $x(u_1)\ge x(u_5)$, we denote by $\widetilde{G}$ the graph obtained from $G$ by deleting all edges which are incident to $v$ except $vu_1$,
and deleting all edges between $u_5$ and the vertices in $N_G(u_5)\setminus{u_4}$ and adding edges between $u_1$ and the vertices in $N_G(u_5)\setminus{u_4}$.
Connecting all pairs of vertices of $V(\widetilde{G})\setminus \{v,u_5\}$ which are not adjacent except $u_1$ and $u_4$ in $\widetilde{G}$,
we get a graph isomorphic to $K'$.
It follows from Lemma 2.5 that $\lambda_n(G^c)\le \lambda_n(K'^c)$.

So we assume that $x(u_1)<x(u_5)$.
We denote by $\widetilde{G}'$ the graph obtained from $G$ by deleting all edges which are incident to $v$ except $vu_3$,
and deleting all edges between $u_1$ and the vertices in $N_G(u_1)$ and adding edges between $u_5$ and $N_G(u_1)$.
Connecting all pairs of vertices of $V(\widetilde{G}'\setminus \{v,u_1\})$
which are not adjacent except $u_3$ and $u_5$,
and connecting $u_1$ and $u_5$ in $\widetilde{G}'$,
we get a graph isomorphic to $K'$.
It follows from Lemma 2.5 that $\lambda_n(G^c)\le \lambda_n(K'^c)$.

{\bf Case 3.} Suppose $v=u_1$ (or $v=u_5$).

If $x(u_2)\ge x(u_5)$, we denote by $\bar{G}$ the graph obtained from $G$ by deleting all edges which are incident  to $u_5$ except $u_5u_4$.
Connecting all pairs of vertices of $V(\bar{G})\setminus \{v,u_5\}$ which are not adjacent except $u_2$ and $u_4$ in $\bar{G}$,
we get a graph isomorphic to $K'$.
It follows from Lemma 2.5 that $\lambda_n(G^c)\le \lambda_n(K'^c)$.

So we assume that $x(u_2)<x(u_5)$.
We denote by $\bar{G}'$ obtained from $G$ by deleting all edges which are incident to $v$ except $vu_2$,
and deleting all edges between $u_2$ and the vertices in $N_G(u_2)\setminus{\{u_3,v\}}$ and adding edges between $u_5$ and the vertices in $N_G(u_2)\setminus{\{u_3,v\}}$.
Connecting all pairs of vertices of $V(\bar{G}')\setminus \{v,u_2\}$ which are not adjacent  except $u_3$ and $u_5$ in $\bar{G}'$, we get a graph isomorphic to $K''$.
It follows from Lemma 2.5 that $\lambda_n(G^c)\le \lambda_n(K''^c)$.

{\bf Case 4.} Suppose $v$ is adjacent to $u_3$.

Without loss of generality assume that $x(u_1)\ge x(u_5)$.
We denote by $\hat{G}$ the graph obtained from $G$ by deleting all edges which are incident to $v$ except $vu_3$,
and deleting all edges between $u_5$ and the vertices in $N_G(u_5)$ and adding edges between $u_1$ and the vertices in $N_G(u_5)$
and connecting $u_5$ and $u_1$.
Connecting all pairs of vertices of $V(\hat{G})\setminus \{v,u_5\}$ except $u_1$ and $u_3$ in $\hat{G}$,
we get a graph isomorphic to $K'$.
It follows from Lemma 2.5 that $\lambda_n(G^c)\le \lambda_n(K'^c)$.

If none of the above four cases occur
then by deleting some edges incident to $v$
and connecting some edges in $V(G)\backslash v$ we easily obtain a graph
isomorphic to $K'$ or $K''$.
It follows from Lemma 2.5 that $\lambda_n(G^c)\le \max \{\lambda_n(K'^c), \lambda_n(K''^c)\}.$

Thus, by Lemma 2.6 we finally have $\lambda_n(G^c)\le \lambda_n(K'^c)$.
\end{proof}

Let $a\ge 3$ and $b\ge 2$ be two integers and $a+b=n$.
Denote by $K(a,b)$ the graph by deleting an edge $uw$ of a completed graph $K_a$ and connecting $u$ and $v$ of another completed graph $K_b$.
Clearly, $d(K(a,b))=4$.

\begin{lem}
Suppose $G$ is a simple graph of diameter greater than three on $n$ ($\ge 7$) vertices.
If $q\ge 2$ then $\lambda_n(G^c)\le \lambda_n(K^c(p,q))$.
\end{lem}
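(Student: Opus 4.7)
The plan is to transform $G$ into $K(p,q)$ by repeated application of Lemmas 2.3 and 2.4, showing that $\lambda_n(G^c)$ does not decrease at any step. Since $d(G)>3$, there exists a shortest path $P=u_1u_2u_3u_4u_5$ of length $4$ in $G$. Let $x$ be an eigenvector of $D(G^c)$ associated with $\lambda_n(G^c)$, and consider the induced partition with $|V_+\cup V_0|=p\ge q=|V_-|\ge 2$.

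The strategy has two phases. First, I would apply Lemma 2.4, in the direction that deleting a cross edge between $V_+\cup V_0$ and $V_-$ does not decrease $\lambda_n(G^c)$, to remove all cross edges except a single edge $uv$ kept for connectivity; since deletions can only increase distances, the condition $d(G)>3$ is trivially preserved throughout this phase. Second, I would apply Lemma 2.3, in the direction that adding an internal edge within $G[V_+\cup V_0]$ or $G[V_-]$ does not decrease $\lambda_n(G^c)$, to complete $G[V_-]$ to $K_q$ and $G[V_+\cup V_0]$ to $K_p$ minus one strategically chosen edge $uw$, where $w$ is selected so that the length-$4$ condition is maintained and $u$ coincides with the endpoint of the retained cross edge $uv$. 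The resulting graph is precisely $K(p,q)$.

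The main obstacle is Phase 2, namely verifying that each internal edge addition preserves $d(G)>3$, since densification generally shrinks diameter. A case analysis on the positions of $V_-$ and $V_+\cup V_0$ relative to the shortest path $P$ would be required, paralleling the four cases in the proof of Lemma 2.7 (where $v$ occupies one of the positions $u_3$, $u_2$/$u_4$, $u_1$/$u_5$, or a neighbour of $u_3$). Unlike Lemma 2.7, the flexibility afforded by $q\ge 2$ lets us reroute a length-$4$ path through $V_-$ when necessary, and lets us identify the "last" edge $uw$ in $K_p$ whose absence guarantees a distance-$4$ pair across the cross edge $uv$. Once the case analysis is settled, each step gives a graph whose least distance eigenvalue is at least $\lambda_n(G^c)$; iterating (or invoking Lemma 2.5) and noting that the terminal graph is isomorphic to $K(p,q)$ yields $\lambda_n(G^c)\le \lambda_n(K^c(p,q))$.
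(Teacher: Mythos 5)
Your overall strategy (transform $G$ into $K(p,q)$ by adding edges inside $V_+\cup V_0$ and $V_-$ and deleting cross edges, then invoke Lemmas 2.3--2.5) is the same as the paper's, but as written the proposal has a genuine gap: the entire difficulty is deferred to an unexecuted ``case analysis on the positions of $V_-$ and $V_+\cup V_0$ relative to $P$,'' and that case analysis is in fact unnecessary. The paper's argument is two lines: since $G$ is connected there is a cross edge $uv$ with $u\in V_+\cup V_0$, $v\in V_-$; and since $d(G)>3$ there must exist a vertex $w$ adjacent to neither $u$ nor $v$ (otherwise every vertex would be within distance $1$ of $\{u,v\}$ and the diameter would be at most $3$), which one may take in $V_+\cup V_0$. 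Declaring $uw$ to be the unique missing internal edge and $uv$ the unique surviving cross edge produces exactly $K(p,q)$, and its diameter is automatically $4$: for any $v'\in V_-\setminus\{v\}$ (here is where $q\ge 2$ is used --- not, as you suggest, to ``reroute a length-$4$ path through $V_-$'') every $w$--$v'$ walk must use the edge $uv$, and $w\not\sim u$, so $d(w,v')=2+1+1=4$. So the ``strategically chosen edge $uw$'' you postulate has a one-sentence construction, and no analysis of the path $P=u_1\cdots u_5$ is needed at all.

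Two further points. First, your ordering of the phases is problematic: deleting all cross edges except $uv$ \emph{before} completing $G[V_+\cup V_0]$ and $G[V_-]$ can disconnect the graph (e.g.\ if $G[V_-]$ has two components hanging off distinct cross edges), so ``the condition $d(G)>3$ is trivially preserved throughout this phase'' fails --- the intermediate graph need not even be connected. The paper avoids this by performing all additions and deletions together and checking only the terminal graph $K(p,q)$. Second, your worry about intermediate diameters in Phase 2 is legitimate but is not resolved by your proposal; it is exactly the issue that Lemma 2.5 (with its restriction to intermediate graphs of diameter greater than three) is meant to absorb, and your text neither carries out the verification nor explains why it can be skipped. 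As it stands the proposal is a plan for a proof rather than a proof.
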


\begin{proof}
Note that $G$ is a connected graph.
There must be one edge $uv$, where $u\in V_+\cup V_0$ and $v\in V_-$.
Since $d(G)>3$, there exists a vertex $w$ which is adjacent to neither $u$ nor $v$.
Without loss of generality we assume $w\in V_+\cup V_0$.
Connecting all pairs of vertices in $G[V_+\cup V_0]$ and $G[V_-]$ which are not adjacent except $u$ and $w$,
and deleting all edges between $V_+\cup V_0$ and $V_-$ except $uv$ in $G$,
we get a graph isomorphic to $K(p,q)$.
By Lemma 2.5 we have
$\lambda_n(G^c)\le \lambda_n(K^c(p,q))$.
\end{proof}

\begin{lem}
 $\lambda_n(K'^c)<\lambda_n(K^c(p,q))$.
\end{lem}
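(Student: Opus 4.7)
The plan is to mirror the polynomial-comparison approach of Lemma 2.6. Applying Lemma 2.1 (since $d(K(p,q))=4>3$) gives $D(K^c(p,q))=J_n-I_n+A(K(p,q))$, and the equitable partition of $V(K(p,q))$ into the five classes $\{u\}$, $\{w\}$, $\{v\}$, $V_y:=V(K_p)\setminus\{u,w\}$ and $V_z:=V(K_q)\setminus\{v\}$ allows one to read off the $5\times 5$ quotient matrix $B$ directly from the clique structure. Set $\psi(\lambda):=\det(\lambda I_5-B)$.

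The orthogonal complement of the constant-on-classes subspace is spanned by sum-zero vectors supported in $V_y$ and in $V_z$; on each such block the restriction of $D(K^c(p,q))$ equals $2(J-I)$, so these contribute $n-5$ further eigenvalues all equal to $-2$. Using the same interlacing argument as in Lemma 2.6 (the tree $T$ of that lemma embeds as an induced subgraph of $K^c(p,q)$ as well, e.g.\ taking a vertex of $V_z$ together with $v$, $u$ and two vertices of $V_y$), $\lambda_n(K^c(p,q))$ is strictly less than every other root of $\psi$ and so coincides with the smallest root of $\psi$.

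Setting $\alpha:=\lambda_n(K'^c)$, so that $\phi(\alpha)=0$ by Lemma 2.6, the target inequality $\lambda_n(K^c(p,q))>\alpha$ is equivalent to $\psi(\alpha)<0$: indeed, $\psi$ is monic of odd degree and $\alpha$ lies to the left of every root of $\psi$ other than possibly the smallest, so the sign of $\psi(\alpha)$ decides on which side of that smallest root $\alpha$ sits. Combining $\phi(\alpha)=0$ with $\psi(\alpha)<0$ gives the reformulation $(\psi-\phi)(\alpha)<0$, and the lemma is reduced to a uniform polynomial sign check at $\lambda=\alpha$.

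The main obstacle is exactly this uniform sign check, since the coefficients of $\psi-\phi$ are non-trivial polynomials in $n,p,q$ with $p+q=n\ge 7$ and $p\ge q\ge 2$. I would compute $\psi-\phi$ explicitly and try to factor it so as to isolate a non-negative combinatorial factor (such as $p-2$ or $(p-2)(q-2)$) times a term of definite sign in the range $\lambda<\alpha$. Should a clean factorization prove elusive, the boundary case $q=2$ (in which $K(p,q)\cong K''$ and the computation collapses onto the $\varphi$ of Lemma 2.6) reduces $\psi$ to a one-parameter family in $n$ that can be dispatched directly, and a monotonicity-in-$q$ argument can then extend the strict inequality to the range $q\ge 3$.
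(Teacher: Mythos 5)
Your overall strategy (compare the two quintics $\phi$ and $\psi_{p,q}$ at $\alpha:=\lambda_n(K'^c)$) is the same as the paper's, and your reformulation of the goal as the sign condition $\psi_{p,q}(\alpha)<0$, i.e.\ $(\psi_{p,q}-\phi)(\alpha)<0$, is in fact the logically correct way to finish. But the proposal stops exactly at the step that constitutes the proof, and the route you sketch for completing it cannot work as described. From equations (3) and (5) one gets
\begin{equation}
\psi_{p,q}(\lambda)-\phi(\lambda)=(2p-6)\lambda^{2}+(11p-33)\lambda+14p-42=(p-3)(2\lambda+7)(\lambda+2),
\nonumber
\end{equation}
so the non-negative combinatorial factor you hope for is $p-3$, but the remaining quadratic does \emph{not} have a definite sign on a half-line: it is positive for $\lambda<-7/2$ and negative only on the window $(-7/2,-2)$. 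Hence your plan of finding "a term of definite sign in the range $\lambda<\alpha$" fails; what is actually needed is a two-sided localization $-7/2<\alpha<-2$, which neither your proposal nor the paper supplies. Worse, the bound you import from Lemma 2.6, $\alpha<-3.8$, would force $(\psi_{p,q}-\phi)(\alpha)>0$ and hence the \emph{opposite} inequality under your own equivalence; and that bound is itself unjustified, because $D(G^c)=J_n-I_n+A(G)$ has all off-diagonal entries in $\{1,2\}$ (diameter $2$), so the tree $T$, whose distance matrix contains entries equal to $3$, is an induced but not an isometric subgraph of $K'^c$, and $D(T)$ is not a principal submatrix of $D(K'^c)$. (Numerically, for $n=7$ the least root of $\phi$ is about $-2.97$, not below $-3.8$; the lemma then holds precisely because $-2.97$ lies in $(-7/2,-2)$.)

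Two further points need attention. First, your equivalence "$\lambda_n(K^c(p,q))>\alpha \iff \psi_{p,q}(\alpha)<0$" rests on the assertion that $\alpha$ lies to the left of every root of $\psi_{p,q}$ except possibly the smallest; this is plausible (the remaining roots appear to be at least $-2$) but you give no argument, and without it a negative value of $\psi_{p,q}(\alpha)$ only tells you that an odd number of roots exceed $\alpha$. Your identification of the $n-5$ eigenvalues equal to $-2$ is correct and helps here, but only for the spectrum of $D(K^c(p,q))$, not for separating the five quotient roots from one another. Second, be careful with the boundary case $q=2$: since $K(n-2,2)\cong K''$ and $\psi_{n-2,2}=\varphi$, the present lemma asserts $\lambda_n(K'^c)<\lambda_n(K''^c)$, which contradicts Lemma 2.6 as stated; so "dispatching $q=2$ directly" must not lean on Lemma 2.6, and you should be aware that one of the two statements requires repair. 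In short, the skeleton is right, but the decisive inequality is unproved, and proving it requires a genuinely new ingredient: an a priori estimate placing $\lambda_n(K'^c)$ strictly between $-7/2$ and $-2$.
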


\begin{proof}
Let $x$ be the eigenvector of $D(K^c(p,q))$ with respect to $\lambda_n(K^c(p,q))$.
Write $\lambda_n(K^c(p,q))$ for $\lambda_n$.
By the symmetry of $K(p,q)$ all the vertices in $(V_+\cup V_0)\setminus \{u,w\}$ correspond to the same value $x_1$ and all the vertices in $V_-\setminus \{v\}$ correspond to the same value $x_2$.
Let $x(u)=x_u$, $x(v)=x_v$ and $x(w)=x_w$.
By the equation (2) we have
$$ \left\{ \begin{array}{l}
\lambda_nx_u=2x_v+x_w+2(p-2)x_1+(q-1)x_2,\\
\lambda_nx_v=2x_u+x_w+(p-2)x_1+2(q-1)x_2,\\
\lambda_nx_w=x_u+x_v+2(p-2)x_1+(q-1)x_2,\\
\lambda_nx_1=2x_u+x_v+2x_w+2(p-3)x_1+(q-1)x_2,\\
\lambda_nx_2=x_u+2x_v+x_w+(p-2)x_1+2(q-2)x_2.
\end{array}\right.$$

We can transform the above equation into a matrix equation $(\lambda_nI_5-D_{K^c(p,q)})x'=0$,
where $x'=(x_u,x_v,x_w,x_1,x_2)^T$ and
$$D_{K^c(p,q)}=\left( \begin{matrix}
0&2&1&2(p-2)&q-1\\
2&0&1&p-2&2(q-1)\\
1&1&0&2(p-2)&q-1\\
2&1&2&2(p-3)&q-1\\
1&2&1&p-2&2(q-2)
\end{matrix} \right).$$

Let $\psi_{p,q}(\lambda)=\det (I_5\lambda-D_{K^c(p,q)})$.
Then we have
\begin{equation}
\begin{split}
\psi_{p,q}(\lambda)=&{\lambda}^{5}- \left( 2\,q-10+2\,p \right) {\lambda}^{4}\\
&- \left( -3\,pq+16\,p+16\,q-40 \right) {\lambda}^{3}\\
&- \left( -18\,pq+44\,p+50\,q-74\right) {\lambda}^{2}\\
&- \left( -30\,pq+45\,p+63\,q-53 \right) \lambda+
12\,pq-10\,p-22\,q+2.
\end{split}
\end{equation}

From the equations (3) and (5) we have
$$\psi_{p,q}(\lambda)-\phi(\lambda)= \left( 2\,p-6 \right) {\lambda}^{2}+ \left( 11\,p-33 \right) \lambda+
14\,p-42.$$

Clearly the tree $T$ of order $5$ is an induced subgraph of $K^c(p,q)$, $D(K^c(p,q))$ contains $D(T)$ as a principle submatrix.
Whereas $\lambda_4(T)<-3.8$,
by Interlacing theorem we have $\lambda_n(K^c(p,q))<-3.8$.
Recall that $p\ge 4$ and $q\ge 2$.
Therefore, we can compute out that $\psi_{p,q}(\lambda)-\phi(\lambda)>0$ if $\lambda<-3.8$.
By Lemma 2.6 we know $\lambda_n(K'^c)<-3.8$, and so
$\lambda_n(K'^c)<\lambda_n(K^c(p,q))$.
\end{proof}

\begin{lem}
$\lambda_n(K^c(p,q))<\lambda_n(K^c(\left\lceil {\frac{n}{2}} \right\rceil ,\left\lfloor {\frac{n}{2}} \right\rfloor))$.
\end{lem}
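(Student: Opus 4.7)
The plan is to show that each balancing step $(p,q)\mapsto(p-1,q+1)$ with $p > q+1$ strictly increases the least distance eigenvalue, and then iterate until the partition $(\lceil n/2\rceil, \lfloor n/2\rfloor)$ is reached. The heart of the argument is a direct comparison of the characteristic polynomials $\psi_{p,q}$ and $\psi_{p-1,q+1}$ from formula (5), evaluated at a strategically chosen value of $\lambda$.

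Setting $m = p - q \ge 2$ and substituting $(p,q)\mapsto(p-1,q+1)$ into (5), the $\lambda^4$-coefficient cancels (it depends only on $p+q$), leaving
\begin{equation}
\psi_{p-1,q+1}(\lambda) - \psi_{p,q}(\lambda) = 3(m-1)\lambda^3 + (18m-24)\lambda^2 + (30m-48)\lambda + (12m-24). \nonumber
\end{equation}
A direct check gives $\lambda = -2$ as a root of this cubic for every $m$, hence
\begin{equation}
\psi_{p-1,q+1}(\lambda) - \psi_{p,q}(\lambda) = (\lambda+2)\bigl[\,3(m-1)\lambda^2 + 6(2m-3)\lambda + 6(m-2)\,\bigr]. \nonumber
\end{equation}
For $m\ge 2$ the bracketed quadratic opens upward with vertex at $\lambda = -(2m-3)/(m-1) \in (-2,-1]$, so it is strictly decreasing on $(-\infty,-3.8]$; its value at $\lambda = -3.8$ simplifies to $3.72\,m + 13.08$, which is positive for every $m\ge 2$. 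Thus the bracket is positive throughout $(-\infty,-3.8]$, and since $\lambda + 2 < 0$ there, I obtain $\psi_{p-1,q+1}(\lambda) < \psi_{p,q}(\lambda)$ for every $\lambda < -3.8$.

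To turn this polynomial inequality into an eigenvalue comparison, I reuse the interlacing argument from Lemma 2.9: the tree $T$ of order $5$ embeds as an induced subgraph of $K^c(p-1,q+1)$, so $\lambda^* := \lambda_n(K^c(p-1,q+1)) < -3.8$. Since $\psi_{p-1,q+1}(\lambda^*)=0$, the previous step forces $\psi_{p,q}(\lambda^*)>0$. Because $\psi_{p,q}$ is monic of degree $5$ with $\psi_{p,q}(\lambda)\to-\infty$ as $\lambda\to-\infty$, its smallest root lies strictly below $\lambda^*$; as that smallest root is a root of the characteristic polynomial of an equitable quotient of $D(K^c(p,q))$, it is itself an eigenvalue of $D(K^c(p,q))$ and therefore at least $\lambda_n(K^c(p,q))$. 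Chaining these gives $\lambda_n(K^c(p,q)) < \lambda^*$, i.e., a single balancing step strictly raises the least eigenvalue. Iterating drives $m$ down by $2$ each step until $m\in\{0,1\}$, yielding $\lambda_n(K^c(p,q)) < \lambda_n(K^c(\lceil n/2\rceil, \lfloor n/2\rfloor))$. The main obstacle is recognizing the $(\lambda+2)$ factor of the difference polynomial; once this structural cancellation is in hand, the sign analysis reduces to a one-line bound on an upward parabola.
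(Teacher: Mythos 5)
Your proposal is correct and follows essentially the same route as the paper: Lemma 2.10 is likewise proved by comparing the quotient characteristic polynomials of consecutive graphs in the balancing chain (the paper's difference $\psi_{p,q}-\psi_{p-1,q-1}$ is a typo for $\psi_{p,q}-\psi_{p-1,q+1}$, as its displayed coefficients confirm) and invoking the bound $\lambda_n<-3.8$ from the interlacing argument with the tree $T$. Your version is in fact tighter than the paper's: the explicit factor $(\lambda+2)$ and the evaluation at $\lambda=-3.8$ repair the paper's claim that the difference is positive for all $\lambda<-3$, which fails for $\lambda$ slightly below $-3$ once $p-q\ge 6$, and your final step converting $\psi_{p,q}(\lambda^*)>0$ into the eigenvalue inequality is spelled out where the paper leaves it implicit.
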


\begin{proof}
By the equation (5) we have
\begin{equation}
\begin{split}
\psi_{p,q}(\lambda)-\psi_{p-1,q-1}(\lambda)=& \left( -3\,p+3\,q+3 \right) {\lambda}^{3}+ \left( -18\,p+18\,q+24
\right) {\lambda}^{2}\\
&+ \left( -30\,p+30\,q+48 \right) \lambda-12\,p+
12\,q+24.
\nonumber
\end{split}
\end{equation}
Recall that $p>q\ge 2$.
By computation we obtain that $\psi_{p,q}(\lambda)-\psi_{p-1,q-1}(\lambda)>0$ if $\lambda<-3$.
Note that $n=p+q$.
By Lemma 2.7 we know $\lambda_n(K^c(p,q))<-3.8$, and so $\lambda_n(K^c(p,q))<\lambda_n(K^c(\left\lceil {\frac{n}{2}} \right\rceil ,\left\lfloor {\frac{n}{2}} \right\rfloor))$.
\end{proof}

Combining Lemmas 2.8-2.10 we have the following theorem.

\begin{thm}
Suppose $G$ is a simple graph of diameter greater than three on $n\ge 7$ vertices. Then
$$\lambda_n(G^c)<\lambda_n\left(K^c\left(\left\lceil {\frac{n}{2}} \right\rceil ,\left\lfloor {\frac{n}{2}} \right\rfloor\right)\right).$$
\end{thm}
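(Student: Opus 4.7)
The plan is to case split on $q = |V_-|$, the size of the negative-sign part of an eigenvector $x$ of $D(G^c)$ belonging to $\lambda_n(G^c)$, and then chain the inequalities supplied by Lemmas 2.7 through 2.10. Recall that by convention we have already set up $p = |V_+ \cup V_0| \ge q$, so in particular $p + q = n$ and $p \ge \lceil n/2 \rceil$.

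First I would handle the case $q = 1$. Lemma 2.7 then gives $\lambda_n(G^c) \le \lambda_n(K'^c)$. Since $n \ge 7$ forces $\lceil n/2 \rceil \ge 4$ and $\lfloor n/2 \rfloor \ge 3 \ge 2$, the parameters $(\lceil n/2 \rceil, \lfloor n/2 \rfloor)$ lie in the regime covered by Lemma 2.9, yielding $\lambda_n(K'^c) < \lambda_n(K^c(\lceil n/2 \rceil, \lfloor n/2 \rfloor))$. Composing the two inequalities completes this case.

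Second I would handle the case $q \ge 2$. Here Lemma 2.8 gives $\lambda_n(G^c) \le \lambda_n(K^c(p,q))$, where $p \ge q \ge 2$ are the values determined by $G$. Lemma 2.10 then immediately produces $\lambda_n(K^c(p,q)) < \lambda_n(K^c(\lceil n/2 \rceil, \lfloor n/2 \rfloor))$, and chaining once more delivers the stated bound.

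There is essentially no genuine obstacle left: the extremal analysis has already been pushed through the preceding four lemmas, which reduce an arbitrary $G$ first to a two-parameter candidate $K(p,q)$ (Lemmas 2.7 and 2.8) and then optimize within that family (Lemmas 2.9 and 2.10). The only remaining subtlety is a careful bookkeeping check that the hypotheses of the cited lemmas are satisfied at each step, in particular that $(\lceil n/2 \rceil, \lfloor n/2 \rfloor)$ meets the range assumptions $p \ge 4$, $q \ge 2$ required by Lemmas 2.9 and 2.10 whenever $n \ge 7$; this is routine. Strict inequality comes from Lemma 2.9 in the first case and from Lemma 2.10 in the second (where $p > q$ whenever $G \not\cong K(\lceil n/2 \rceil, \lfloor n/2 \rfloor)$, which is exactly the situation in which something needs to be proved).
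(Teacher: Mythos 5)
Your proof is correct and is essentially identical to the paper's, which simply chains Lemmas 2.7--2.10 in exactly the two cases you describe ($q=1$ via Lemmas 2.7 and 2.9; $q\ge 2$ via Lemmas 2.8 and 2.10). Both your write-up and the paper share the same minor looseness at the boundary — when $q\ge 2$ and $(p,q)$ already equals $(\lceil n/2\rceil,\lfloor n/2\rfloor)$ the chain only yields $\le$ rather than $<$ — but this is inherited from the paper's own argument, not a defect you introduced.
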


\end{document}